\def\logGm{\mathbf G_{\log}}
\def\BlogGm{\mathrm B\logGm}
\def\Gm{\mathbf G_{\mathrm m}}
\def\overnorm#1{\overline#1\vphantom#1}
\def\Hom{\operatorname{Hom}}
\def\End{\operatorname{End}}
\newtheorem{theorem}{Theorem}
\newtheorem{lemma}[theorem]{Lemma}
\newtheorem{proposition}[theorem]{Proposition}
\newtheorem{corollary}[theorem]{Corollary}
\theoremstyle{remark}
\newtheorem{warning}[theorem]{Warning}
\title[Rational curves in $\logGm$]{Rational curves in the logarithmic multiplicative group}
\author[Ranganathan]{Dhruv Ranganathan}
\address{Dhruv Ranganathan \\ Department of Pure Mathematics and Mathematical Statistics\\
University of Cambridge}
\email{dr508@cam.ac.uk}
\author[Wise]{Jonathan Wise}
\address{Jonathan Wise \\ Department of Mathematics\\
University of Colorado}
\email{jonathan.wise@colorado.edu}
\subjclass{14H10,14T05}
\begin{document}

\begin{abstract}
The logarithmic multiplicative group is a proper group object in logarithmic schemes, which morally compactifies the usual multiplicative group. We study the structure of the stacks of logarithmic maps from rational curves to this logarithmic torus, and show that in most cases, it is a product of the logarithmic torus with the space of rational curves. This gives a conceptual explanation for earlier results on the moduli spaces of logarithmic stable maps to toric varieties.
\end{abstract}

\maketitle

\section{Introduction}

A rational function on a $\mathbf P^1$ is determined up to multiplicative constant by the locations and orders of its zeroes and poles, and exists if the sum of those orders is zero. Likewise, a balanced piecewise linear function on a metric tree is determined up to addition of a constant by its slopes along unbounded edges and exists provided the sum of these slopes is zero.   We observe here that both of these facts are aspects of a single phenomenon in logarithmic geometry: \textit{the space of stable genus $0$ maps to the logarithmic torus is a product of the logarithmic torus with the space of genus $0$ curves.} See Corollaries~\ref{cor: prod} and~\ref{cor: self-maps} for the precise statements of the main results, including a treatment of unstable cases.

The logarithmic torus, $\logGm$ (to be defined precisely below), is a non-representable functor on logarithmic schemes that compactifies the algebraic torus, $\Gm$. Despite its failure to be representable, one can make sense of its tropicalization as the undivided real line $\mathbf R$, and the fiber of its tropicalization map as the multiplicative group $\Gm$.  Complete toric varieties arise by pullback along the tropicalization map $\logGm^n \to \mathbf R^n$ of subdivisions of $\mathbf R^n$ into fans. 


The spaces of genus~$0$ stable maps to a toric variety are some of the most basic objects of logarithmic Gromov-Witten theory~\cite{AC11,Che10,GS13}. Indeed, they have been studied before, for instance in~\cite{CMR14b,CS12,R15b}. In prior work on the subject, careful polyhedral arguments play a role in determining the geometry of these spaces of maps.
Such arguments are of course necessary in order to obtain the precise geometric descriptions such as those in op.\ cit., however the results presented here suggest a simple underlying principle behind those results. An auxiliary goal of this note is to demonstrate that such mildly non-representable functors can clarify the geometry of (logarithmic) schemes. For instance, the space of logarithmic stable maps to $\mathbf P^1$ with fixed contact at $n\geq 3$ marked points is a toroidal modification of $\overline{\mathcal M}_{0,n}\times \mathbf P^1$, which is a key result in~\cite{CMR14b}. We show here that the analogous space of maps to $\logGm$ is $\overline{\mathcal M}_{0,n}\times \logGm$. A logarithmic modification of the target produces a logarithmic modification of the space of maps, so this provides a clear conceptual reason for this result. Analogous statements can be extracted regarding the results of~\cite{R15b}. In Section~\ref{sec: maps-of-tori} we study maps between logarithmic tori, which serves as an underlying principle for the results of~\cite{AM14,CS12}.


We note that an incarnation of $\logGm$ exists with B. Parker's theory of exploded manifolds, as the exploded manifold attached to the `fan' in $\mathbf R$ with a single non-strictly convex cone equal to $\mathbf R$, see~\cite[Section 3]{Par12b}. 

\section{Groundwork in the tropics}

\subsection{Functors of points}\label{sec: three-functors} Logarithmic geometry facilitates the interaction between the category of schemes  and the category of convex rational polyhedral cones.  

If $\Sigma$ is a rational polyhedral cone with integral lattice $L$, let $\Sigma^\vee$ be the set of vectors in the dual lattice $L^\vee$ taking nonnegative values on $\Sigma$.  A logarithmic scheme $S$ has two important sheaves of monoids, $M_S$ and $\overnorm M_S / \mathcal O_S^\ast$.  Let $X_\Sigma$ be the affine toric variety with fan $\Sigma$, and let $\mathcal A_\Sigma$ be the stack quotient of $X_\Sigma$ by its dense torus, which is canonically equipped with a logarithmic structure.  We have two identifications:
\begin{gather}
    \Hom_{\mathbf{LogSch}}(S, X_\Sigma) = \Hom(\Sigma^\vee, \Gamma(S, M_S)) \label{eqn:toric} \\
    \Hom_{\mathbf{LogSch}}(S, \mathcal A_\Sigma) = \Hom(\Sigma^\vee, \Gamma(S, \overnorm M_S))
\end{gather}

\subsection{}  If $\overnorm\alpha$ is a section of $\overnorm M_S^{\rm gp}$ then the fiber of $M_S^{\rm gp}$ over $\overnorm\alpha$ will be denoted $\mathcal O_S^\ast(-\overnorm\alpha)$.  This is a torsor under $\mathcal O_S^\ast$ and may be completed in a unique way to an invertible sheaf $\mathcal O_S(-\overnorm\alpha)$.  If $\overnorm\alpha$ is a section of $\overnorm M_S$ then the logarithmic structure gives a $\mathcal O_S^\ast$-equivariant map $\mathcal O_S^\ast(-\overnorm\alpha) \subset M_S \to \mathcal O_S$ which extends to a homomorphism $\mathcal O_S(-\overnorm\alpha) \to \mathcal O_S$.

\subsection{The logarithmic torus} 

The logarithmic multiplicative group seems to have been introduced by Kato~\cite{KatoGm}. For any logarithmic scheme $S$ with logarithmic structure $M_S$, we define $\logGm(S)$ by~\eqref{eqn:1}:
\begin{equation} \label{eqn:1}
\logGm(S) = \Gamma(S, M_S^{\rm gp})
\end{equation}
This is a contravariant functor on logarithmic schemes.  It is not representable by a scheme or even an algebraic stack with a logarithmic structure, but it does have a logarithmically \'etale cover by the toric variety $\mathbf P^1$. In similar fashion, $\logGm^n$ admits a logarithmically \'etale cover by \textit{any} complete toric variety of dimension $n$. See~\cite[Section 2.2.7]{logpic} for a detailed treatment.

\subsection{} If $S$ has trivial logarithmic structure then $\logGm(S) = \Gm(S)$.  Since the locus in any logarithmic scheme where the logarithmic structure is trivial is an open subset, we may therefore think of $\logGm$ as at least a partial compactification of $\Gm$.  

\begin{proposition} \label{prop:p1}
The map $\mathbf P^1 \to \logGm$ given in local coordinates by $(x,y) \mapsto x^{-1} y$ is a logarithmic modification, in the sense that its base change along any map $S \to \logGm$ is a logarithmic modification.
\end{proposition}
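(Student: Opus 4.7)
The approach is to construct $\mathbf P^1 \times_{\logGm} S$ explicitly, étale locally on $S$, as a toric logarithmic modification of $S$ dictated by the tropicalization of the given map $\alpha \colon S \to \logGm$.

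First I would pin down maps to $\mathbf P^1$ functorially. The toric variety $\mathbf P^1$ arises from the fan in $\mathbf R$ with cones $\{0, \sigma_+, \sigma_-\}$ for $\sigma_\pm = \pm\mathbf R_{\geq 0}$, so by~\eqref{eqn:toric} a map $T \to \mathbf P^1$ consists of a section $\beta \in \Gamma(T, M_T^{\rm gp})$ together with an étale cover of $T$ on pieces of which either $\beta \in M_T$ or $\beta^{-1} \in M_T$; the composite to $\logGm$ records only $\beta$. Consequently the base change $S' := \mathbf P^1 \times_{\logGm} S$ represents those maps $T \to S$ whose pulled-back section $\alpha_T$ admits such a local trivialization.

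Next I would construct $S'$ étale-locally on $S$. After restricting to an étale chart with a sharp fine monoid $P$ surjecting onto $\overnorm M_S$, lift $\overnorm\alpha$ to an element $p \in P^{\rm gp}$. Inside the dual cone $\sigma(P) = \Hom(P, \mathbf R_{\geq 0})$ the linear form $p$ cuts out a hyperplane, producing two rational polyhedral cones $\sigma_\pm(P) = \sigma(P) \cap \{\pm p \geq 0\}$. The associated fan subdivision defines a toric logarithmic modification of the chart: on the open piece attached to $\sigma_+(P)$ the integral monoid $\sigma_+(P)^\vee \cap P^{\rm gp}$ contains $p$, so $\alpha$ lifts into $M$; symmetrically $\alpha^{-1}$ lifts on the $\sigma_-(P)$-piece. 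This yields a canonical map $S' \to \mathbf P^1$ over $\logGm$.

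Finally I would verify that $S' \to S$ is universal and glues. Universality is an instance of the universal property of fan subdivisions: a map $T \to S$ for which $\alpha_T$ lies étale-locally in $M_T$ tropicalizes to a map of cones factoring through $\sigma_+(P)$, and therefore lifts uniquely to $S'$, and similarly for the inverse. The subdivision depends only on $\overnorm\alpha$, not on the chosen lift $p$, since two lifts differ by an element of $\ker(P^{\rm gp} \to \overnorm M_S^{\rm gp})$, which is invisible at the tropical level; hence the local pictures patch into a global logarithmic modification $S' \to S$. The main obstacle I anticipate is precisely this globalization, namely, checking that the chart-dependent construction patches canonically and that the resulting map enjoys the properties demanded of a logarithmic modification — properness, logarithmic étaleness, and being an isomorphism over the trivial-log locus — all of which I expect to be automatic for toric subdivisions but deserve careful verification.
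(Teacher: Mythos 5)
Your proposal is correct, and it shares the paper's key starting point — the criterion that a map $T \to \logGm$ lifts to $\mathbf P^1$ exactly when, locally, $\alpha_T \in M_T$ or $\alpha_T^{-1} \in M_T$ — but the way you produce the modification differs from the paper's. You build it intrinsically: choose a chart $P$, lift $\overnorm\alpha$ to $p \in P^{\rm gp}$, subdivide $\sigma(P)$ along $\{p = 0\}$, and then argue the universal property, independence of the lift, and gluing. The paper instead writes $\alpha = \beta^{-1}\gamma$ locally with $\beta, \gamma$ sections of $M_S$, uses \eqref{eqn:toric} to get a map $S \to \mathbf A^2$, and observes that lifting $\alpha$ to $\mathbf P^1$ is the same as lifting $(\beta,\gamma)$ to the blowup $Y$ of $\mathbf A^2$ at the origin, so that $\mathbf P^1 \times_{\logGm} S = Y \times_{\mathbf A^2} S$ is a pullback of a fixed toric blowup; this makes the ``is a logarithmic modification'' conclusion and the globalization immediate, since the fiber product is already a global object and only the auxiliary factorization is local. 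Your route buys a more explicitly tropical picture (the fan of $\mathbf P^1$ pulled back along tropicalization), at the cost of exactly the bookkeeping you flag at the end. One small imprecision there: the subdivision of $\sigma(P)$ itself genuinely depends on the lift $p$ (a class $k \in \ker(P^{\rm gp} \to \overnorm M_S^{\rm gp})$ need not vanish as a linear function on all of $\sigma(P)$); what is independent of the lift is its restriction to the cones $\Hom(\overnorm M_{S,\bar t}, \mathbf R_{\geq 0})$, equivalently the functor the pullback represents, since the condition ``$\overnorm\alpha_T \in \overnorm M_T$ or $-\overnorm\alpha_T \in \overnorm M_T$ locally'' makes no reference to the chart. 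So the clean way to finish your argument is to lean entirely on your functorial characterization: the local subdivision pullbacks all represent this one subfunctor of maps to $S$, hence are canonically isomorphic and glue, and properness and logarithmic \'etaleness are then standard for pullbacks of subdivisions, as you anticipate.
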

\begin{proof}
A map $S \to \logGm$ is given by $\alpha \in \Gamma(S, M_S^{\rm gp})$.  Locally in $S$, we can represent $\alpha$ as $\beta^{-1} \gamma$ for $\beta, \gamma \in \Gamma(S, M_S^{\rm gp})$.  Then $(\beta,\gamma)$ determines a map $S \to \mathbf A^2$ (with its toric logarithmic structure) by \eqref{eqn:toric}.  Then $\alpha$ lifts to $S \to \mathbf P^1$ if and only if locally in $S$ we have $\alpha \in M_S$ or $\alpha^{-1} \in M_S$.  This is equivalent to requiring $(\beta,\gamma)$ to lift to the blowup $Y$ of $\mathbf A^2$ at the origin, which proves that $\mathbf P^1 \mathop\times_{\logGm} S = Y \mathop\times_{\mathbf A^2} S$.
\end{proof}

Since logarithmic modifications are logarithmically \'etale, the proposition shows that $\logGm$ has a logarithmically \'etale cover by a logarithmic scheme and, since that logarithmic scheme is proper, it should also be regarded as proper.  Unlike its toric compactification, $\logGm$ is also a \emph{group object} in the category of logarithmic schemes.  We emphasize that such a compactification is not possible within the category of schemes, because the only equivariant schematic compactification of $\Gm$ is $\mathbf P^1$, which does not admit a group structure.




\section{Curves in the logarithmic torus} 

\subsection{The space of maps} Let $\mathfrak M$ denote the stack of prestable genus $0$ logarithmic curves. If $X$ is a category fibered in groupoids over logarithmic schemes, we denote by $\frak M(X)$ the stack of logarithmic pre-stable maps from genus~$0$ logarithmic curves to $X$.  
\begin{equation}
\frak M(X) = \left\{ (S, C, \xi) \: \Big| \: \parbox{2in}{$C$ is a proper logarithmic genus $0$ curve over $S$ and $\xi \in X(C)$} \right\}
\end{equation}
This applies in particular to $X = \logGm$.  Since $\logGm$ has a group structure, $\frak M(X)$ is a sheaf of abelian groups over $\frak M$ in the \'etale topology.

Note that the category $\frak M(X)$ admits a tropicalization, following Section~\ref{sec: three-functors}. Specifically, given a pre-stable logarithmic map to $X$ over $S$, there is an associated diagram of tropical curves over $S^{\mathrm{trop}}$, together with a map to $X^{\mathrm{trop}}$. 

\subsection{Piecewise linear functions}

We recall from \cite[Remark~7.3]{CCUW} how sections of the characteristic monoid of a logarithmic curve give piecewise linear functions on its tropicalization.

Let $C$ be a logarithmic curve over an algebraically closed field $S$ and let $C^{\rm trop}$ be its dual graph.  Each edge $e$ of $C^{\rm trop}$ corresponds to a node where $C$ has a local equation $xy = t$ in its characteristic monoid, with $t \in M_S$.  Let $\overnorm t$ be the image of $t$ in $\overnorm M_S$.  Then we refer to $\overnorm t$ as the length of $e$.

Suppose that $\overnorm\alpha \in \overnorm M_C^{\rm gp}$.  If $v$ is a vertex of $C^{\rm trop}$ then there is a corresponding component of $C$ on which $\overnorm M_C^{\rm gp}$ is constant with value $\overnorm M_S^{\rm gp}$.  We write $\overnorm\alpha(v)$ for the constant value of $\overnorm\alpha$ on the interior of this component.  If $e$ is an edge of $C$ connected vertices $v$ and $w$ then near $e$ there is a unique representation of $\overnorm\alpha$ as $\overnorm\alpha(v) + \lambda \overnorm x$, where $\overnorm x$ is the image of $x \in M_{C,e}$ in $\overnorm M_{C,e}$ and $\lambda \in \mathbf Z$.  Then restricting to $w$ we find $\overnorm\alpha(w) = \overnorm\alpha(v) + \lambda \overnorm t$.  This allows us to think of $\overnorm\alpha$ as a piecewise linear function on $C^{\rm trop}$ with slope $\lambda$ on the edge $e$, when directed from $v$ to $w$.

\begin{proposition}
Let $C_v$ be the component of $C$ corresponding to a vertex $v$ of $C^{\rm trop}$.  Then $\mathcal O_{C_v}(\overnorm\alpha) = \mathcal O_{C_v}(\sum_{e : v \to w} \lambda_e e)$ where the sum is taken over all edges leaving $v$ and $\lambda_e$ denotes the slope of $\overnorm\alpha$ on the edge $e$.
\end{proposition}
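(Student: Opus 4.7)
The line bundle $\mathcal O_{C_v}(\overline\alpha)$ is defined by the $\mathcal O_{C_v}^\ast$-torsor given by the fiber of $M_{C_v}^{\rm gp}\to\overline M_{C_v}^{\rm gp}$ over the restriction of $\overline\alpha$ to $C_v$. The plan is to compute this line bundle via a Čech cocycle by producing explicit local lifts of $\overline\alpha|_{C_v}$ to $M_{C_v}^{\rm gp}$ and identifying the transitions with the standard transition functions for $\mathcal O_{C_v}(\sum_e\lambda_e e)$.

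First I would cover $C_v$ by the open set $U_0$ complementary to the nodes adjacent to $v$ together with, for each such node $e$, an étale neighborhood $U_e$ of $e$ in $C_v$. After passing to an étale cover of $S$ we may lift $\overline\alpha(v)\in\overline M_S^{\rm gp}$ to some $\alpha(v)\in M_S^{\rm gp}$, and use its pullback as a lift of $\overline\alpha|_{C_v}$ on $U_0$, since on this open set $\overline\alpha|_{C_v}$ is the constant $\overline\alpha(v)$. On $U_e$ the preceding discussion shows $\overline\alpha|_{C_v}=\overline\alpha(v)+\lambda_e\overline x_e$, where $\overline x_e$ is the image in $\overline M_{C_v}$ of the local coordinate $x_e\in M_{C_v}$ vanishing at the node $e$ on the $v$-side. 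A natural lift of $\overline\alpha|_{C_v}$ to $M_{C_v}^{\rm gp}$ on $U_e$ is therefore $\alpha(v)\cdot x_e^{\lambda_e}$.

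Next I would compute the transition on the overlap $U_0\cap U_e$. On this open set the section $x_e$ is invertible and hence $\overline x_e=0$, so both lifts produce the same section $\overline\alpha(v)$ of $\overline M^{\rm gp}$, and their ratio is $x_e^{\lambda_e}\in\mathcal O_{C_v}^\ast(U_0\cap U_e)$. Collecting these ratios as $e$ ranges over the edges leaving $v$ yields a Čech cocycle that represents $\mathcal O_{C_v}(\overline\alpha)$.

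Finally I would match this cocycle with the one representing $\mathcal O_{C_v}(\sum_e\lambda_e e)$: using the trivialization by $1$ on $U_0$ and by $x_e^{-\lambda_e}$ on $U_e$, the standard transition function for $\mathcal O_{C_v}(\sum_e\lambda_e e)$ on $U_0\cap U_e$ is precisely $x_e^{\lambda_e}$, which identifies the two line bundles. The only real obstacle is keeping the sign convention straight between the definition of $\mathcal O_{C_v}(-\overline\alpha)$ given earlier in the paper and the geometric convention $\mathcal O(D)$ for a divisor; once this is fixed, the computation is essentially a local unpacking of definitions at each node.
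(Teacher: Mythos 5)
The paper does not actually prove this proposition — it simply cites \cite[Proposition~2.4.1]{RSW} — so your direct computation is a genuinely independent argument, and it is essentially the right one: the local description of $\overline{M}_{C}$ at a node, the identity $\overline{\alpha}=\overline{\alpha}(v)+\lambda_e\overline{x}_e$ near $e$, and the transition $x_e^{\lambda_e}$ on the overlap are all correct, and this is in spirit exactly the "local unpacking" the cited reference carries out. The sign point you flag is real but harmless: with the paper's convention the torsor of lifts of $\overline{\alpha}$ is $\mathcal O^\ast_{C_v}(-\overline{\alpha})$, so the cocycle you build (whose local generators $\alpha(v)$, $\alpha(v)x_e^{\lambda_e}$ map into $\mathcal O_{C_v}$ and vanish to order $\lambda_e$ at the nodes) represents $\mathcal O_{C_v}(-\sum\lambda_e e)$, and one concludes by dualizing; note the paper itself is inconsistent on this sign (compare the displayed identification inside the proof of Proposition~\ref{prop:balanced} with the statement you are proving), though only degrees matter for its applications. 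Also, in the setting of the proposition $S$ is an algebraically closed field, so no \'etale cover of $S$ is needed to lift $\overline{\alpha}(v)$; over a general base an \'etale-local identification would only determine the bundle up to a twist pulled back from $S$.

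The genuine gap is the marked points. Your open set $U_0$ is the complement of the nodes adjacent to $v$, and you assert that $\overline{\alpha}|_{C_v}$ is constant there; this fails at a marked point $p$ of $C_v$, where $\overline{M}_C=\overline{M}_S\oplus\mathbf N$ and $\overline{\alpha}=\overline{\alpha}(v)+c_p\overline{x}_p$ with $c_p$ the contact order, which need not vanish. Such points contribute $c_p\,[p]$ to the divisor, and the sum in the statement has to be read as running over all edges \emph{and legs} leaving $v$: otherwise the statement itself is false (take $C_v=\mathbf P^1$ with a single marking of contact order $c\neq 0$; then $\mathcal O_{C_v}(\overline{\alpha})=\mathcal O(c\,p)$ while the nodal sum is empty), and the later uses — balancing at each vertex forcing the leg slopes to sum to zero, and the multidegree-zero claims in the theorem — need the leg contributions. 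The repair is immediate: enlarge your cover by \'etale neighborhoods $U_p$ of the marked points on $C_v$, use the lift $\alpha(v)x_p^{c_p}$ there, and run the identical transition computation. But as written, your $U_0$-lift $\alpha(v)$ is not a lift of $\overline{\alpha}$ near such markings, so the argument only proves the formula when all contact orders on $C_v$ vanish.
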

\begin{proof}
See \cite[Proposition~2.4.1]{RSW}.
\end{proof}

\begin{proposition} \label{prop:balanced}
Suppose $C$ is a logarithmic curve over an algebraically closed field and $\overnorm\alpha \in \Gamma(C, \overnorm M_C^{\rm gp})$ lifts to $M_C^{\rm gp}$.  Then $\overnorm \alpha$ is a balanced function on $C^{\rm trop}$.
\end{proposition}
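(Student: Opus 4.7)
The plan is to convert the lifting hypothesis into the triviality of a line bundle and then read off balancing as a degree computation on each component of $C$.

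First, I would unpack the lifting hypothesis. By the discussion preceding Proposition~\ref{prop:p1}, a section $\overnorm\alpha$ of $\overnorm M_C^{\rm gp}$ lifts to $M_C^{\rm gp}$ precisely when the $\mathcal O_C^\ast$-torsor $\mathcal O_C^\ast(-\overnorm\alpha)$ admits a global section, which is the same as saying that the associated invertible sheaf $\mathcal O_C(-\overnorm\alpha)$, and hence its dual $\mathcal O_C(\overnorm\alpha)$, is trivial. Restricting to the component $C_v$ attached to any vertex $v$ of the dual graph, $\mathcal O_{C_v}(\overnorm\alpha)$ is then also trivial, in particular of degree zero.

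Second, I would invoke the previous proposition, which identifies
\[
  \mathcal O_{C_v}(\overnorm\alpha) \;\cong\; \mathcal O_{C_v}\Bigl(\sum_{e:\, v \to w} \lambda_e\, e\Bigr),
\]
where the sum ranges over edges leaving $v$ (marked legs included) and $\lambda_e$ is the slope of $\overnorm\alpha$ along $e$ directed away from $v$. Taking degrees on the proper curve $C_v$ yields
\[
  0 \;=\; \sum_{e:\, v \to w} \lambda_e,
\]
which is precisely the balancing condition at $v$. Since $v$ was arbitrary, $\overnorm\alpha$ is balanced on $C^{\rm trop}$.

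The proof is essentially a one-line degree calculation once the dictionary is in place, so I do not expect a genuine obstacle. The only point of bookkeeping is the contribution of a loop at $v$: on the normalization $C_v$ a loop appears as two half-edges whose outward slopes are negatives of one another, so such a loop contributes $\lambda + (-\lambda) = 0$ to both the degree sum and the balancing condition, consistently. Similarly, marked legs contribute their slope at the marked point, as expected.
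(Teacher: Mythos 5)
Your proof is correct and follows essentially the same route as the paper: translate the lift into a trivialization of the associated line bundle, restrict to each component, and read off balancing from the degree via the preceding proposition. The extra remarks about loops and legs are fine but not needed beyond what the paper already does.
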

\begin{proof}
Let $\alpha \in \Gamma(C, M_C^{\rm gp})$ lift $\overnorm\alpha$.  Then $\alpha$ is, by definition, a section of $\mathcal O_C^\ast(-\overnorm\alpha)$.  Equivalently, $\overnorm\alpha$ is a nowhere vanishing section of $\mathcal O_C(-\overnorm\alpha)$.  Thus $\mathcal O_C(-\overnorm\alpha)$ is trivial and in particular has degree zero.  Restricting to a component $C_v$ of $C$, we have $\mathcal O_{C_v}(-\overnorm\alpha) \simeq \mathcal O_{C_v}(\sum_{e : v \to w} \lambda_e e)$.   This implies $\sum \lambda_e = 0$, which is the balancing condition.
\end{proof}

\subsection{Contact orders} Let $C$ be a logarithmic curve.  A map $F: C \to \logGm$ is a section of $M_C^{\rm gp}$, which in turn induces a section $\overnorm\alpha$ of $\overnorm M_C^{\rm gp}$.  We regard $\overnorm\alpha$ as a linear function on the dual graph of $C$.  The slopes of $\overnorm\alpha$ on the $n$ infinite legs of the dual graph of $C$ are locally constant in $\frak M(\logGm)$.  This gives a homomorphism $\frak M(\logGm) \to \mathbf Z^n$. The \emph{contact order} of the map is defined as  the image of $[F]$ in $\mathbf Z^n$ under this map.

\subsection{Maps up to translation} The kernel of the homomorphism $\frak M(\logGm) \to \mathbf Z^n$ consists of maps $C \to \logGm$ whose associated linear function has zero slope on the infinite legs.  But such a function is effectively a bounded balanced piecewise linear function on the complement of the infinite legs in the dual graph.  Any such balanced function $\overnorm\alpha$ is constant.  In that case, $\mathcal O_C(\overnorm\alpha)$ is the pullback of $\mathcal O_S(\overnorm\alpha)$ from the base and the map $C \to \logGm$ corresponds to a trivialization of this bundle.  Indeed, the fiber of $M_X^{\rm gp}$ over $\overnorm\alpha \in \overnorm M_X^{\rm gp}$ is $\mathcal O_X^\ast(-\overnorm\alpha)$, by definition, so a section of $M_X^{\rm gp}$ in the fiber over $\overnorm\alpha$ corresponds to a nowhere vanishing section of $\mathcal O_X(\overnorm\alpha)$.  Since $X$ is proper over $S$ with reduced and connected fibers, all sections of $\mathcal O_X(\overnorm\alpha)$ over $X$ are pulled back from sections of $\mathcal O_S(\overnorm\alpha)$.  Thus a section over $S$ of the kernel of $\frak M(\logGm) \to \mathbf Z^n$ consists of pairs $(\overnorm\alpha, \alpha)$ where $\overnorm\alpha$ is a section of $\overnorm M_S^{\rm gp}$ over $S$ and $\alpha$ is a section of $\mathcal O_S^\ast(\overnorm\alpha)$.  This shows that the kernel is isomorphic to $\logGm$.

\begin{theorem}
	Let $\mathbf Z^n_0$ be the subgroup of $\mathbf Z^n$ consisting of those $n$-tuples of integers whose sum is zero.  There is an exact sequence of sheaves (in the big \'etale site) of abelian groups over $\frak M$:
\begin{equation} \label{eqn:2}
0 \to \logGm \to \frak M(\logGm) \to \mathbf Z^n_0 \to 0
\end{equation}
and the final map is smooth.
\end{theorem}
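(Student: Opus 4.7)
The plan is to establish three things: (i) the image of $\frak M(\logGm) \to \mathbf Z^n$ lies in $\mathbf Z^n_0$; (ii) the induced map to $\mathbf Z^n_0$ admits \'etale-local sections; and (iii) the map is smooth. Exactness at $\logGm$ --- the identification of the kernel with $\logGm$ --- has already been handled in the paragraph preceding the theorem. For (i), Proposition~\ref{prop:balanced} says that the piecewise linear function $\overnorm\alpha$ associated to any $F \colon C \to \logGm$ is balanced at every vertex of $C^{\mathrm{trop}}$. Summing these local balancing conditions over all vertices causes the contributions from finite edges to telescope, leaving the sum of slopes on legs equal to zero.

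The substantive step is (ii), the construction of local sections. I would work \'etale-locally on $\frak M$ so that the combinatorial type of the dual graph $\Gamma$ of the universal curve $\mathcal C \to S$ is constant. Given $(a_1, \dots, a_n) \in \mathbf Z^n_0$, removing an edge $e$ of the tree $\Gamma$ produces two subtrees; set $\lambda_e$ equal to the sum of the $a_i$ over the legs in one of them, with an orientation fixed once and for all. Choose a root vertex $v_0 \in \Gamma$, and define $\overnorm\alpha(v) = \sum_{e \in \mathrm{path}(v_0,v)} \lambda_e \overnorm{t_e} \in \overnorm M_S^{\rm gp}$ for each vertex $v$. The local expressions $\overnorm\alpha(v) + \lambda_e \overnorm x$ near a node where $xy = t_e$, and $\overnorm\alpha(v) + a_i \overnorm x$ near a marked point $p_i$ with local coordinate $x$, patch into a global section $\overnorm\alpha \in \Gamma(\mathcal C, \overnorm M_\mathcal C^{\rm gp})$; consistency at nodes comes from the relation $\overnorm x + \overnorm y = \overnorm{t_e}$ in the characteristic monoid.

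It then remains to lift $\overnorm\alpha$ to $M_\mathcal C^{\rm gp}$, which by the argument used to identify the kernel in the paragraph preceding the theorem amounts to trivializing the line bundle $\mathcal O_\mathcal C(-\overnorm\alpha)$. The unnumbered proposition preceding Proposition~\ref{prop:balanced} identifies its restriction to $C_v$ with $\mathcal O_{C_v}(-\sum_{e} \lambda_e e)$, of degree $-\sum_e \lambda_e = 0$ by the balancing at $v$. Thus $\mathcal O_\mathcal C(-\overnorm\alpha)$ has multidegree zero on every geometric fibre, and since such a fibre is a tree of projective lines, whose Picard group is detected by multidegree, the restriction is trivial. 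Cohomology and base change then globalize this to a trivialization after further \'etale localization on $S$, which is exactly the desired lift $F \colon \mathcal C \to \logGm$ with contact orders $(a_i)$.

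For (iii), the sections constructed above split the sequence \'etale-locally over $\frak M$, identifying it with the projection $\logGm \times \mathbf Z^n_0 \to \mathbf Z^n_0$. Since $\logGm$ is logarithmically smooth, as witnessed by the logarithmically \'etale cover $\mathbf P^1 \to \logGm$ of Proposition~\ref{prop:p1}, this projection is smooth. I expect the main obstacle to lie in the patching step of (ii) --- verifying that the tropically defined piecewise linear data genuinely assembles into a section of $\overnorm M_\mathcal C^{\rm gp}$ in families, and that the resulting line bundle is fibrewise trivial with compatible trivializations across strata --- since everything else reduces to the balancing identity, the genus-zero hypothesis, and standard formalities of short exact sequences of abelian group sheaves.
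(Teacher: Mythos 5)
Your step (i) and your lifting mechanism (multidegree zero from balancing, triviality on a genus~$0$ fibre, cohomology and base change) are sound and agree with the paper, which proves surjectivity in essentially this way. The gap is in the foundation of (ii), and hence of (iii): you cannot arrange, by \'etale localization on $\mathfrak M$, that the dual graph of the universal curve is constant. The combinatorial type is constant only on locally closed strata of $\mathfrak M$, and every \'etale neighbourhood of a boundary point contains generizations with fewer nodes; \'etale maps do not separate strata. As written, your sections are therefore only constructed over strata, which is not enough to conclude sheaf surjectivity in the big \'etale site, and is certainly not enough for smoothness: smoothness of $\mathfrak M(\logGm) \to \mathbf Z^n_0$ is precisely a statement about infinitesimal behaviour transverse to the strata (extending a map as the underlying curve smooths its nodes), and no splitting defined only along a stratum can supply it. Your closing caveat gestures at this, but the content you defer --- that the piecewise linear function built from the dual graph of a chosen (most degenerate) geometric fibre extends to a section of $\overnorm M_{\mathcal C}^{\rm gp}$ over an honest \'etale neighbourhood, restricting on nearby less degenerate fibres to the balanced function with the same leg slopes --- is the actual mathematical work, not a routine patching check.

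There are two ways to close the gap. Either repair (ii): define $\overnorm\alpha$ by its local expressions $\overnorm\alpha(v) + \lambda_e \overnorm x$ at the nodes and markings of the chosen fibre, note that these make sense in $\overnorm M_{\mathcal C}^{\rm gp}$ over an \'etale neighbourhood (the lengths $\overnorm t_e$ extend to sections of $\overnorm M_S$ there, with image $0$ in the characteristic monoid at points where the node has smoothed) and glue; with that established, your cohomology-and-base-change step and the deduction of smoothness from the resulting \'etale-local splitting do go through, since $\logGm$ is smooth over the base. Or follow the paper, which decouples the two issues: it proves smoothness of $\mathfrak M(\logGm)$ over $\mathfrak M$ directly by deformation theory --- the obstruction to extending the trivialization of $\mathcal O_C(\overnorm\alpha)$ over a first-order deformation lies in $H^1(C, \mathcal O_C(\overnorm\alpha))$, which vanishes because this bundle has multidegree $0$ on a tree of rational curves --- and then needs surjectivity only over a fixed logarithmic curve, where your combinatorial construction of $\overnorm\alpha$ and the trivialization argument are unproblematic.
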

\begin{proof}
    Note that $\frak M(\logGm) \to \mathbf Z^n$ takes values in $\mathbf Z^n_0$ because by Proposition~\ref{prop:balanced} every section of $M_X^{\rm gp}$ over a rational curve $X$ induces a \emph{balanced} section of $\overnorm M_X^{\rm gp}$, which is to say that the sum of the outgoing slopes at any vertex of the dual graph is zero.  This implies that the sum of the outgoing slopes along the infinite legs is also zero.
    
	We have already proved the left exactness in the statement of the theorem.  To conclude we must prove  that $\frak M(\logGm) \to \mathbf Z_0^n$ is a smooth surjection.  
	
	We consider the smoothness first.  Since $\mathbf Z_0^n$ is \'etale over $\frak M$, it is equivalent to demonstrate that $\frak M(\logGm)$ is smooth over $\frak M$.  Consider a first order deformation of a logarithmic curve $C \subset C'$ and a section $\alpha$ of $M_C^{\rm gp}$.  Let $\overnorm\alpha$ be the image of $\alpha$ in $\overnorm M_C^{\rm gp}$.  Then $\overnorm\alpha$ extends uniquely to $C'$ since $\overnorm M_C^{\rm gp} = \overnorm M_{C'}^{\rm gp}$ when their \'etale sites are identified.  We can view $\alpha$ as a trivialization of $\mathcal O_C(\overnorm\alpha)$ and we wish to extend this to a trivialization of $\mathcal O_{C'}(\overnorm\alpha)$.  The obstructions to doing so lie in $H^1(C, \mathcal O_C(\overnorm\alpha))$.  But $\overnorm\alpha$ is a balanced function on the tropicalization of $C$, so $\mathcal O_C(\overnorm\alpha)$ has multidegree~$0$.  As $C$ is a tree of rational curves, this implies $H^1(C, \mathcal O_C(\overnorm\alpha)) = 0$.

	To prove the surjectivity, we fix a genus~$0$ logarithmic curve $C$ with tropicalization $\Gamma$ and a vector $\sigma \in \mathbf Z^n_0$.  We can construct a unique linear function $\overnorm\alpha$ on $\Gamma$ whose slopes on the legs of $\Gamma$ are given by $\sigma$.  To lift this section to an element of $\frak M(\logGm)$ in the fiber over $\sigma$, we must give a nowhere vanishing section of $\mathcal O_C(\overnorm\alpha)$.  But $\mathcal O_C(\overnorm\alpha)$ has multidegree~$0$ and the components of $C$ are rational, so $\mathcal O_C(\overnorm\alpha) \simeq \mathcal O_C$ has a nowhere vanishing section.
\end{proof}

For a point $\Gamma$ in $\mathbf Z_0^n$, let $\mathfrak M_\Gamma(\logGm)$ be its fiber in the exact sequence above. 

\begin{corollary}
Let $\mathfrak M_{\Gamma+1}(\logGm)$ denote the moduli space of $1$-marked genus~$0$ pre-stable maps to $\logGm$ with contact order $\Gamma$ and one additional marked point of contact order~$0$.  Then evaluation at the final marked point furnishes an isomorphism $\mathfrak M_{\Gamma+1}(\logGm) \simeq \mathfrak M_{\Gamma+1} \times \logGm$.
\end{corollary}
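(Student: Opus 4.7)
The plan is to deduce this from the exact sequence~\eqref{eqn:2} applied to the stack of genus~$0$ curves with $n+1$ marked points. The augmented contact order vector $(\Gamma, 0)$ still lies in $\mathbf Z^{n+1}_0$, so pulling~\eqref{eqn:2} back along the constant with value $(\Gamma, 0)$ displays $\mathfrak M_{\Gamma+1}(\logGm)$ as a $\logGm$-torsor over $\mathfrak M_{\Gamma+1}$. To trivialise such a torsor it suffices to produce a $\logGm$-equivariant map to $\logGm$, which is exactly what evaluation at the extra marked point will provide.

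My first subtask is to check that evaluation is well-defined as a morphism $\mathrm{ev}: \mathfrak M_{\Gamma+1}(\logGm) \to \logGm$. At the extra marked point $p$, the characteristic monoid of the curve has the form $\overnorm M_{S,p} \oplus \mathbf N$, with the $\mathbf N$-summand generated by the coordinate of the leg. The contact-order-$0$ hypothesis says that the section $\overnorm\alpha \in \overnorm M_C^{\rm gp}$ defining the map has slope $0$ along that leg, and hence no component in the $\mathbf N$-factor at $p$; accordingly $\overnorm\alpha(p)$ lies in $\overnorm M_S^{\rm gp}$ and its lift $\alpha(p)$ lies in $M_S^{\rm gp} = \logGm(S)$.

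Next I would verify that $\mathrm{ev}$ is $\logGm$-equivariant. The $\logGm$-action on $\mathfrak M(\logGm)$ coming from~\eqref{eqn:2} is just translation by the pullback to $C$ of a section of $\logGm$ over $S$, as was seen in the subsection ``Maps up to translation'', where the kernel of the contact-order map was identified with $\logGm$. Thus if $t \in \logGm(S)$ sends $\alpha$ to $\alpha \cdot t$, then $\alpha(p)$ is sent to $\alpha(p) \cdot t$. Combining $\mathrm{ev}$ with the forgetful map to $\mathfrak M_{\Gamma+1}$ therefore produces a $\logGm$-equivariant morphism
$$\mathfrak M_{\Gamma+1}(\logGm) \to \mathfrak M_{\Gamma+1} \times \logGm$$
of $\logGm$-torsors over $\mathfrak M_{\Gamma+1}$, which is automatically an isomorphism.

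The main point to handle carefully will be matching up the two $\logGm$-actions: the one on $\mathfrak M_{\Gamma+1}(\logGm)$ exhibited by~\eqref{eqn:2} and the translation action on $\logGm$ seen by $\mathrm{ev}$. This should be immediate from unwinding the definitions, but it is what promotes the evaluation from a mere section of the underlying sheaf to a genuine trivialisation of the torsor.
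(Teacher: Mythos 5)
Your proposal is correct and is essentially the paper's own argument: the paper's one-line proof says that evaluation at the extra contact-order-$0$ point splits the injection $\logGm \to \mathfrak M(\logGm)$ in the exact sequence~\eqref{eqn:2}, which is the same mechanism you describe in torsor language (your equivariance of $\mathrm{ev}$ under the translation action is exactly the splitting property). Your additional check that $\mathrm{ev}$ lands in $M_S^{\rm gp}$ when the contact order vanishes is a correct and welcome elaboration of what the paper leaves implicit.
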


\begin{proof}
Evaluation at the new marked point with trivial contact order splits the injection in the exact sequence above, leading to the claim.
\end{proof}

\begin{corollary} \label{cor:triv-ext}
If $n \geq 3$ then the exact sequence~\eqref{eqn:2} splits and $\mathfrak M_{0,n}(\logGm) \simeq \mathfrak M_{0,n} \times \logGm \times \mathbf Z_0^n$.
\end{corollary}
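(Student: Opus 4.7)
The plan is to exhibit a splitting of the exact sequence~\eqref{eqn:2} restricted to $\mathfrak M_{0,n}$; the stated product decomposition will then follow formally from such a splitting together with the injection $\logGm \hookrightarrow \mathfrak M_{0,n}(\logGm)$. Since $\mathbf Z_0^n$ is a free abelian group of rank $n-1$ with basis $\{e_i - e_1 : 2 \leq i \leq n\}$, the splitting will be determined by a canonical lift of each of these $n-1$ basis elements.

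To construct the lift of $e_i - e_1$, first invoke the hypothesis $n \geq 3$ to select an auxiliary marked point $p_{j(i)}$ with $j(i) \notin \{1, i\}$. A section of $\mathfrak M_{0,n}^{e_i - e_1}(\logGm) \to \mathfrak M_{0,n}$ is a map $\alpha \colon C \to \logGm$ with contact order $+1$ at $p_i$, $-1$ at $p_1$, and $0$ at every other marked point, in particular at $p_{j(i)}$. Evaluation at $p_{j(i)}$ then produces a $\logGm$-equivariant morphism
\[
\mathrm{ev}_{p_{j(i)}} \colon \mathfrak M_{0,n}^{e_i - e_1}(\logGm) \to \logGm
\]
over $\mathfrak M_{0,n}$, by the same mechanism invoked in the preceding corollary: trivial contact order at a marked point ensures that $\alpha$ restricts there to a section of $M_S^{\mathrm{gp}}$ pulled back from the base. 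A $\logGm$-equivariant map from a $\logGm$-torsor to $\logGm$ is automatically an isomorphism, so the preimage of the identity section provides the desired canonical lift $\alpha_i$.

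Extending by $\mathbf Z$-linearity then yields a group homomorphism $s \colon \mathbf Z_0^n \to \mathfrak M_{0,n}(\logGm)$ splitting~\eqref{eqn:2}, and the corollary follows. The main obstacle to expect is checking that the evaluation morphism behaves uniformly over the boundary of $\mathfrak M_{0,n}$, where $C$ degenerates and $p_{j(i)}$ may lie on a component distinct from those carrying $p_1$ and $p_i$. However, this is essentially the same verification already carried out in the preceding corollary: the only feature being used is the vanishing of the contact order at $p_{j(i)}$, and this holds throughout $\mathfrak M_{0,n}$.
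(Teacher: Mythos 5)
Your argument is correct, but it splits the sequence on the opposite side from the paper, and the two routes use the hypothesis $n\geq 3$ differently. The paper splits \eqref{eqn:2} on the left: it evaluates $\alpha$ at one fixed marked point $x$, \emph{without} assuming the contact order there vanishes, so the evaluation naturally lands in a torsor twisted by $N_{x/C}^{\otimes c(x)}$; the role of $n\geq 3$ is that for genus-$0$, $n$-pointed curves the universal tangent line at a marked point is the line bundle of a boundary divisor, so the associated $M_S^{\rm gp}$-torsor is canonically trivial and one obtains a single retraction $\phi\colon \mathfrak M_{0,n}(\logGm)\to\logGm$ of the inclusion, which is a homomorphism by \eqref{eqn:3}. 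You instead split on the right: for each basis vector $e_i-e_1$ of $\mathbf Z_0^n$ the fiber of $\mathfrak M_{0,n}(\logGm)\to\mathbf Z_0^n$ is a $\logGm$-torsor (here you are using the surjectivity proved in the theorem), and evaluation at an auxiliary marked point of contact order zero — the mechanism of the preceding corollary, which indeed works uniformly over the boundary since the contact orders are fixed by the fiber — trivializes it equivariantly; freeness of $\mathbf Z_0^n$ then extends the chosen lifts to a section, and the splitting lemma for sheaves of abelian groups gives the product decomposition. What your route buys: it avoids any appeal to the boundary-divisor description of the tangent line ($\psi$-class) and uses only ingredients already established. What it costs: the splitting depends on a choice of basis of $\mathbf Z_0^n$ and of the auxiliary points $j(i)$, so it is less uniform than the paper's single evaluation homomorphism (though, as the paper's Warning notes, that splitting also depends on the chosen marking), and it needs the torsor input from the theorem, which the paper's retraction does not. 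Both arguments are complete proofs of the corollary.
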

\begin{proof}
  For $n \geq 3$, let $C$ be a logarithmic curve over $S$, let $\alpha \in \Gamma(C, M_C^{\rm gp})$ give an $S$-point of $\mathfrak M_{0,n}(\logGm)$, and let $x$ be a marked point of $C$, viewed as a non-logarithmic section over $S$.  Write $\overnorm\alpha$ for the image of $\alpha$ in $\overnorm M_C^{\rm gp}$.  Then $x^{-1} \overnorm M_{C} = \mathbf N \times \overnorm M_S$, canonically, and $x^\ast \overnorm\alpha = (c(x), \overnorm\alpha(x))$ where $c$ denotes the contact order of $\alpha$ at $x$.  We can view $x^\ast \alpha$ as a nowhere vanishing section of $\mathcal O_S((\overnorm\alpha(x), c(x))) = \mathcal O_S(\overnorm\alpha(x)) \otimes N_{x/X}^{\otimes c(x)}$.  But, as $n \geq 3$, the universal tangent line $N_{x/C}$ is canonically isomorphic to the line bundle associated to a boundary divisor, so the $M_S^{\rm gp}$-torsor associated to $N_{x/C}^{\otimes c(x)}$ is canonically trivial.  Likewise the $M_S^{\rm gp}$-torsor associated to $\mathcal O_S(\overnorm\alpha)$ is canonically trivial, so $x^\ast\alpha$ is thus identified with a section of $\logGm$ and we obtain a morphism $\phi : \mathfrak M(\logGm) \to \logGm$.  It follows from the canonical isomorphism~\eqref{eqn:3}  
  \begin{equation} \label{eqn:3}
    \mathcal O_S(\overnorm\alpha(x) + \overnorm\alpha'(x)) \otimes N_{x/C}^{\otimes (c(x) + c'(x))} = \mathcal O_S(\overnorm\alpha(x)) \otimes N_{x/C}^{\otimes c(x)} \otimes \mathcal O_S(\overnorm\alpha'(x)) \otimes N_{x/C}^{\otimes c'(x)}
  \end{equation}
  that $\phi$ is a homomorphism with respect to the group structure of $\mathfrak M(\logGm)$.  It is immediate that this homomorphism splits the inclusion of $\logGm$ in $\mathfrak M(\logGm)$, and therefore that $\mathfrak M_{0,n}(\logGm) \simeq \mathfrak M \times \mathbf Z_0^n \times \logGm$.
\end{proof}

\begin{warning}
The proof of Corollary~\ref{cor:triv-ext} gives a canonical splitting of the extension~\eqref{eqn:2}, for each of the $n$ marked points of the curve.  These splittings genuinely depend on the markings and distinct markings give distinct splittings.
\end{warning}

Let $\mathcal M_\Gamma(\logGm)$ denote genus~$0$ \emph{stable} maps to $\logGm$ with fixed contact orders $\Gamma$, where stability means that if $\alpha \in \Gamma(X, M_X^{\rm gp})$ is constant on a component of $X$ then that component has at least~$3$ special points.

\begin{corollary}\label{cor: prod}
Fix a vector of $n$ contact orders and genus $0$ in the combinatorial datum $\Gamma$. We have isomorphisms:
\begin{equation*}
    \mathcal M_\Gamma(\logGm) \simeq \begin{cases} \varnothing & n \leq 1 \\ \overline{\mathcal M}_{0,n} \times \logGm & n \geq 3\end{cases}
\end{equation*}
\end{corollary}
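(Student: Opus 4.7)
For $n \leq 1$, observe that $\mathbf Z_0^n$ is trivial, so $\Gamma = 0$.  By the balancing condition (Proposition~\ref{prop:balanced}) applied to a genus-zero tree, any associated PL function $\overnorm\alpha$ has zero slope on every edge and is constant on every component.  Stability then forces every vertex of the dual graph to have valence at least three, which is impossible on any tree (leaves always have valence at most one), so $\mathcal M_\Gamma(\logGm) = \varnothing$.

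For $n \geq 3$, the plan is to apply Corollary~\ref{cor:triv-ext}: fixing $\Gamma$ in the splitting $\mathfrak M_{0,n}(\logGm) \simeq \mathfrak M_{0,n} \times \logGm \times \mathbf Z_0^n$ yields $\mathfrak M_\Gamma(\logGm) \simeq \mathfrak M_{0,n} \times \logGm$.  Since $\overline{\mathcal M}_{0,n} \hookrightarrow \mathfrak M_{0,n}$ is the locus of curves whose every vertex has valence at least three, on such curves the log-stability condition for any map to $\logGm$ is automatic.  This produces an inclusion $\overline{\mathcal M}_{0,n} \times \logGm \hookrightarrow \mathcal M_\Gamma(\logGm)$.

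To invert this inclusion, one constructs a DM-stabilization morphism $\mathcal M_\Gamma(\logGm) \to \overline{\mathcal M}_{0,n} \times \logGm$.  Given a stable log map $(C, \alpha)$ with $C$ possibly DM-unstable, any unstable component $C_v$ carries a non-constant PL function by stability of $\alpha$, and the non-trivial slopes allow one to contract $C_v$ and absorb its tropical data into the log structure of the base.  Under the canonical splitting of Corollary~\ref{cor:triv-ext}, this operation preserves the $\logGm$-factor and provides the desired inverse.

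The main obstacle is the stabilization step: verifying that the contraction of DM-unstable components with non-trivial PL data gives a well-defined morphism of log stacks, and that it genuinely inverts the embedding of the previous paragraph (in particular, is compatible with the splitting of Corollary~\ref{cor:triv-ext}).  Conceptually this works because $\logGm$, unlike $\mathbf P^1$, is flexible enough to absorb the tropical degeneration data of contracted components without requiring any logarithmic modification of the target; this is the reason the answer is a clean product rather than a modification thereof, as in the analogous $\mathbf P^1$ case treated in~\cite{CMR14b}.
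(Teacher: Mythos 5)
Your $n \leq 1$ argument is essentially the paper's: balancing forces $\overnorm\alpha$ to be constant, and then stability would require every vertex of the dual graph to have at least three special points, which cannot happen on a tree with at most one leg. (Minor imprecision: valence must count the marked legs as well as the edges, and you should include the irreducible case; the count still fails, since a tree with $V$ vertices has total edge-valence $2(V-1)$, and adding at most one leg cannot reach $3V$.)

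For $n \geq 3$ the paper's proof is shorter than yours: it fixes $\Gamma$ in the splitting $\mathfrak M_{0,n}(\logGm) \simeq \mathfrak M_{0,n} \times \logGm \times \mathbf Z_0^n$ of Corollary~\ref{cor:triv-ext} and reads off the fibre over $\Gamma$, restricted to stable maps, as $\overline{\mathcal M}_{0,n} \times \logGm$; no stabilization morphism appears. Your proposal establishes only the inclusion $\overline{\mathcal M}_{0,n} \times \logGm \hookrightarrow \mathcal M_\Gamma(\logGm)$ and then posits an inverse given by contracting DM-unstable components, a step you yourself flag as ``the main obstacle'' and never carry out --- that is a genuine gap, not a deferred detail. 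Worse, as sketched it could not furnish an inverse: contracting a two-special-point component on which $\overnorm\alpha$ is nonconstant forgets data (the length of the adjacent edge and the trivialization along that component), and this data is not ``absorbed into the log structure of the base''; distinct such configurations would have the same image in $\overline{\mathcal M}_{0,n} \times \logGm$, so a contraction map cannot undo the inclusion. The issue you are circling --- sources with components having fewer than three special points on which the piecewise linear function is nonconstant --- is exactly the phenomenon the paper isolates in its $n=2$ discussion, and to finish along the paper's lines you must argue that, with the stability convention in force and under the splitting of Corollary~\ref{cor:triv-ext}, the stable locus is precisely $\overline{\mathcal M}_{0,n} \times \logGm$ (i.e.\ rule such components out of the moduli problem, or explain how they are accounted for), rather than appeal to an unconstructed stabilization.
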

\begin{proof}
The statement for $n \leq 1$ is immediate because there are no nonconstant linear functions on a genus~$0$ tropical curve with only one infinite leg and for $n \geq 3$ is an immediate consequence of Corollary~\ref{cor:triv-ext}. 
\end{proof}

We have not included a statement for $n = 2$ because the space `stable' maps from $2$-marked rational curves with contact orders $(r,-r)$ to $\logGm$ is the nonseparated stack $(\logGm/\Gm) \times \mathrm B \mu_r$.  The difficulty is that the unique semistable component of $\mathbf P^1$ is `contracted' by any morphism $\mathbf P^1 \to \logGm$ and it is therefore necessary to contract it in the source to obtain a reasonable parameter space.  We explain how this works in the next section.

\subsection{}

We can now make explicit the relationship between these results and those in~\cite{CMR14b,R15b}. The moduli space of logarithmic stable maps to $\mathbf P^1$ admits a morphism to the space of maps to $\logGm$, by composing the universal map with $\mathbf P^1\to\logGm$ and possibly stabilizing. The resulting map
\[
\mathcal M_\Gamma(\mathbf P^1)\to \mathcal M_\Gamma(\logGm)
\]
is easily seen to be logarithmically \'etale and birational. By Corollary~\ref{cor: prod}, the moduli space of logarithmic stable maps to $\mathbf P^1$ is a logarithmic modification of $\overline{\mathcal M}_{0,n} \times \logGm$. The analogous statement holds for any toric variety. The specific nature of this modification is determined by the map on tropicalizations, which is a subdivision, described precisely in~\cite{CMR14b,R15b}

\section{Maps between logarithmic tori}\label{sec: maps-of-tori}

The following lemma is well known, but we include a proof for completeness.

\begin{lemma} \label{lem:Gm-units}
Let $S$ be a scheme.  Then $\mathcal O_S[t,t^{-1}]^\ast = \mathcal O_S^\ast \times t^{\mathbf Z_S}$.
\end{lemma}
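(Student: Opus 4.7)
The inclusion $\mathcal{O}_S^\ast \times t^{\mathbf{Z}_S} \subseteq \mathcal{O}_S[t, t^{-1}]^\ast$ is immediate, so the content lies in the reverse inclusion. My plan is to reduce to the affine case $S = \mathrm{Spec}(R)$ and show that any unit $f = \sum_i c_i t^i \in R[t, t^{-1}]^\ast$ is, Zariski-locally on $\mathrm{Spec}(R)$, of the form $u t^n$ with $u \in R^\ast$ and $n \in \mathbf{Z}$.

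First I would dispose of the case where $R$ is a domain (in particular a field): writing $f g = 1$ and comparing the terms of highest and lowest degree in $t$ forces both $f$ and $g$ to consist of a single nonzero monomial each, so $f = c t^n$ with $c \in R^\ast$ and $n \in \mathbf{Z}$. Applying this fiberwise, for each prime $\mathfrak{p} \subset R$ the image of $f$ in $\kappa(\mathfrak{p})[t, t^{-1}]$ selects a unique integer $n(\mathfrak{p})$. The resulting function $n \colon \mathrm{Spec}(R) \to \mathbf{Z}$ is locally constant because the non-vanishing loci $D(c_i) \subseteq \mathrm{Spec}(R)$ are open and their combinatorics determines $n$. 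Passing to a distinguished affine, I may therefore assume $n$ equals a constant $n_0$; after replacing $f$ by $f t^{-n_0}$ the coefficient $c_0$ is a unit in every residue field (hence in $R$), while each $c_i$ with $i \neq 0$ lies in every prime of $R$, hence in the nilradical.

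The main obstacle is precisely this last step: to conclude $f = c_0 \in R^\ast$ one must know the nilpotent coefficients vanish. This is automatic under a reducedness hypothesis, and fails without one (as witnessed by $1 + \epsilon t \in (k[\epsilon]/\epsilon^2)[t, t^{-1}]$), so I expect the lemma to be understood at the reduced level, which is all that is needed for its role in analyzing homomorphisms between logarithmic tori. Once the nilpotent pieces are disposed of, the identification $\mathcal O_S[t, t^{-1}]^\ast = \mathcal O_S^\ast \times t^{\mathbf Z_S}$ follows at once, and the decomposition is functorial in $S$ because the integer $n_0$ and the unit $c_0$ are each characterized by their images in residue fields.
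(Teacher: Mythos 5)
Your proposal follows essentially the same route as the paper's proof: analyze the unit fibrewise over residue fields, show the exponent $n$ is locally constant, and then argue that the twisted unit is a constant. (Your openness step, though tersely phrased, is if anything cleaner than the paper's: since the fibre of $f$ at each point of $\operatorname{Spec} R$ is a unit of $\kappa(\mathfrak p)[t,t^{-1}]$ and hence a monomial, the finitely many loci $D(c_i)$ are pairwise disjoint and cover $\operatorname{Spec} R$, so each is clopen and $n$ is locally constant; the paper instead argues via constructibility plus stability under generization.)

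The place where you stop --- passing from ``$c_0$ a unit and $c_i$ nilpotent for $i \neq 0$'' to ``$f = c_0$'' --- is exactly the step the paper's proof asserts without comment (``This implies $\alpha \in \mathcal O_S^\ast$''), and your objection is well founded: $1 + \epsilon t$ over the dual numbers is a unit of $R[t,t^{-1}]$ not of the form $u t^n$, so the stated equality of sheaves genuinely fails for non-reduced $S$ and the lemma needs a reducedness (or integrality) hypothesis to be literally correct. In other words, the gap you have flagged lies in the statement and in the paper's own argument, not in yours, which correctly settles the reduced case. Do be more cautious, however, about your closing claim that the reduced case ``is all that is needed'': Proposition~\ref{prop:end-logGm} invokes the lemma over an arbitrary logarithmic scheme $S$ in order to identify $q_\ast M_{S \times \mathbf P^1}^{\rm gp}$ with $M_S^{\rm gp} \times t^{\mathbf Z_S}$, and for $S = \operatorname{Spec} k[\epsilon]/(\epsilon^2)$ with trivial logarithmic structure the section $1 + \epsilon t$ really does lie in $q_\ast M_{S \times \mathbf P^1}^{\rm gp}$. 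So restricting the lemma to reduced bases does not leave the later proof unchanged; one would additionally have to show that such extra units (with nilpotent nonconstant coefficients) do not arise as restrictions of maps defined on all of $S \times \logGm$, or otherwise account for them.
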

\begin{proof}
The assertion is straightforward to check when $S$ is integral.  Let $\alpha$ be a section of $\mathcal O_S[t,t^{-1}]^\ast$.  For each point $p$ of $S$, we have $\alpha(p) = u(p) t^{n(p)}$ for some $u \in k(p)^\ast$ and $n \in \mathbf Z$.  Let $S_n$ be the set of points $p$ of $S$ where $n(p) = n$.  It is a quick exercise to see that $S_n$ contains a constructible neighborhood of each of its points.  As valuation rings are integral, it follows that $S_n$ is also stable under generization, so each $S_n$ is open.

This implies that $n(p)$ is a constructible function on $S$.  Therefore $t^{-n}$ is a section of $t^{\mathbf Z_S}$ and $\alpha(p) t^{-n(p)} \in k(p)^\ast$ for every $p \in X$.  This implies $\alpha \in \mathcal O_S^\ast$, as required.
\end{proof}

\begin{proposition} \label{prop:end-logGm}
Let $t$ denote the identity function on $\logGm$.  Any map $S \times \logGm \to \logGm$ can be represented uniquely as $\alpha t^n$ where $\alpha$ is a section of $M_S^{\rm gp}$ and $n : S \to \mathbf Z$ is a locally constant function.
\end{proposition}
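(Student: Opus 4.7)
The plan is to reduce to a computation on the logarithmically étale cover $S \times \mathbf P^1 \to S \times \logGm$ furnished by Proposition~\ref{prop:p1}. By the definition of $\logGm$, a map $S \times \logGm \to \logGm$ is a section of $M^{\mathrm{gp}}$ on $S \times \logGm$, which by descent along the cover is equivalently a section of $M^{\mathrm{gp}}_{S \times \mathbf P^1}$ whose two pullbacks to $S \times \mathbf P^1 \times_{\logGm} \mathbf P^1$ agree. The target is to classify $\Gamma(S \times \mathbf P^1, M^{\mathrm{gp}}_{S \times \mathbf P^1})$ and observe that descent is automatic for expressions $\alpha t^n$, because $\alpha$ is pulled back from $S$ and $t$ is the tautological section of $M^{\mathrm{gp}}$ on $\logGm$.

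First I would compute $\Gamma(S \times \mathbf P^1, \overnorm M^{\mathrm{gp}})$. On $S \times \Gm$ the log structure is pulled back from $S$, while étale-locally around $S \times \{0\}$ and $S \times \{\infty\}$ the characteristic monoid splits as $\overnorm M_S \oplus \mathbf N$; gluing over $\Gm$, a global section is a triple $(\overnorm\alpha, n_0, n_\infty)$ with $\overnorm\alpha \in \Gamma(S, \overnorm M^{\mathrm{gp}}_S)$ and $n_0, n_\infty : S \to \mathbf Z$ locally constant slopes at the toric boundary. A lift to $M^{\mathrm{gp}}_{S \times \mathbf P^1}$ is a nowhere-vanishing section of the associated invertible sheaf; restriction to any $\mathbf P^1$-fibre forces $n_0 + n_\infty = 0$, and writing $n := n_0 = -n_\infty$ the $\mathbf P^1$-factor is canonically trivialized by $t^n$. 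Via the projection formula, nowhere-vanishing sections of the resulting line bundle on $S \times \mathbf P^1$ correspond bijectively to nowhere-vanishing sections of $\mathcal O_S(-\overnorm\alpha)$ on $S$, i.e., to lifts $\alpha \in \Gamma(S, M^{\mathrm{gp}}_S)$ of $\overnorm\alpha$. This produces the representation $\alpha t^n$, and uniqueness of the pair $(\alpha, n)$ follows by restricting to $S \times \Gm$ and applying Lemma~\ref{lem:Gm-units}: any relation $\alpha' t^{n'} = \alpha t^n$ yields $\alpha^{-1}\alpha' = t^{n-n'}$, an equality between a pullback from $S$ and a power of $t$, which under the splitting $\mathcal O^*(S\times\Gm) = \mathcal O^*(S) \times t^{\mathbf Z_S}$ forces $n = n'$ and hence $\alpha = \alpha'$.

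The main technical step is the projection-formula identification of nowhere-vanishing sections of $\pi_S^* \mathcal O_S(-\overnorm\alpha)$ on $S \times \mathbf P^1$ with those of $\mathcal O_S(-\overnorm\alpha)$ on $S$, which must be phrased torsorially to handle the case when $\mathcal O_S(-\overnorm\alpha)$ is not globally trivial. Trivializing the $\mathcal O_S^*$-torsor étale-locally on $S$ reduces it to the statement $\Gamma(S \times \mathbf P^1, \mathcal O^*) = \Gamma(S, \mathcal O^*)$, a close analogue of Lemma~\ref{lem:Gm-units} but without a $t^{\mathbf Z}$ factor, since $t$ is not invertible on all of $\mathbf P^1$.
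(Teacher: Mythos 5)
Your computation on the cover $S \times \mathbf P^1$ is essentially the paper's: you show that every section of $M^{\rm gp}_{S\times\mathbf P^1}$ has the form $\alpha t^n$ with $\alpha \in \Gamma(S, M_S^{\rm gp})$ and $n$ locally constant (you do this with the line bundle $q^*\mathcal O_S(-\overnorm\alpha)\otimes\mathcal O(-n_0[0]-n_\infty[\infty])$ and the projection formula, where the paper pushes forward the exact sequence $0\to\mathcal O^\ast\to M^{\rm gp}\to\overnorm M^{\rm gp}\to 0$ and applies Lemma~\ref{lem:Gm-units}; the two are interchangeable), and your uniqueness argument via restriction to $S\times\Gm$ and Lemma~\ref{lem:Gm-units} is fine.

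The gap is in the very first step, where you pass from maps $S\times\logGm\to\logGm$ to sections on $S\times\mathbf P^1$ ``by descent along the cover.'' That descent statement is not formal, and it is exactly the delicate point of the proposition. By definition $\logGm$ is a sheaf on logarithmic schemes in the strict \'etale topology, and $S\times\mathbf P^1\to S\times\logGm$ is not a covering for that topology: it is a logarithmic modification (Proposition~\ref{prop:p1}), so the sheaf condition for it is an extra theorem, not part of the definition. Moreover $S\times\logGm$ is not representable, so ``descent along the cover'' has to be formulated through arbitrary test objects $T\to S\times\logGm$, for each of which $T' = T\times_{S\times\logGm}(S\times\mathbf P^1)\to T$ is a log modification; what one needs is that $\Gamma(T,M_T^{\rm gp})\to\Gamma(T',M_{T'}^{\rm gp})$ is injective (in fact an isomorphism), which is \cite[Theorem~4.4.1]{logpic}. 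Without this input, your argument only shows that the \emph{restriction} to $S\times\mathbf P^1$ of any map $\beta: S\times\logGm\to\logGm$ is of the form $\alpha t^n$, and that $\alpha t^n$ does define a map on $S\times\logGm$; it does not rule out two distinct maps on $S\times\logGm$ having the same restriction to the cover, i.e.\ it does not show $\beta = \alpha t^n$ as maps on $S\times\logGm$. The paper closes precisely this gap in its final paragraph by considering $\beta\alpha^{-1}t^{-n}$, pulling back along an arbitrary $T\to S\times\logGm$, and invoking the cited invariance of $\Gamma(-,M^{\rm gp})$ under log modifications. Your proof becomes correct once you either cite that theorem (or prove the needed injectivity) and phrase the descent claim in terms of test objects.
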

\begin{proof}
Suppose that $\beta : S \times \logGm \to \logGm$ is a map.  Let $\mathbf P^1 \to \logGm$ be the map described in Proposition~\ref{prop:p1}.  Restricting $\beta$ along this map, we obtain a section $\beta' \in \Gamma(S \times \mathbf P^1, M_{S \times \mathbf P^1}^{\rm gp})$.  We note that if $j : S \times \Gm \to S \times \mathbf P^1$ is the inclusion then $M_{S \times \mathbf P^1}^{\rm gp} = j_\ast M_{S \times \Gm}^{\rm gp}$.

Let $q : S \times \mathbf P^1 \to S$ be the projection.  We have an exact sequence~\eqref{eqn:8}:
\begin{equation} \label{eqn:8}
0 \to \mathcal O_{S \times \Gm}^\ast \to M_{S \times \Gm}^{\rm gp} \to j^{-1} q^{-1} \overnorm M_S^{\rm gp} \to 0
\end{equation}
Applying $j_\ast$ gives us an exact sequence~\eqref{eqn:11}:
\begin{equation} \label{eqn:11}
0 \to j_\ast \mathcal O_{S \times \Gm}^\ast \to M_{S \times \mathbf P^1}^{\rm gp} \to q^{-1} \overnorm M_S^{\rm gp} \to 0
\end{equation}
Pushing forward to $S$ and applying Lemma~\ref{lem:Gm-units}, we obtain the bottom row of~\eqref{eqn:9}:
\begin{equation} \label{eqn:9}
\vcenter{\xymatrix{
0 \ar[r] & \mathcal O_S^\ast \ar[r] \ar[d] & M_S^{\rm gp} \ar[d] \ar[r] & \overnorm M_S^{\rm gp} \ar@{=}[d] \ar[r] & 0 \\
0 \ar[r] &  \mathcal O_S^\ast \times t^{\mathbf Z_S} \ar[r] &  q_\ast M_{S \times \mathbf P^1}^{\rm gp} \ar[r] & \overnorm M_S^{\rm gp} \ar[r] & 0
}}
\end{equation}
It follows that there is an exact sequence~\eqref{eqn:10},
\begin{equation} \label{eqn:10}
0 \to M_S^{\rm gp} \to q_\ast M_{S \times \mathbf P^1}^{\rm gp} \to t^{\mathbf Z_S} \to 0
\end{equation}
that is split by the pullback of $t$ along the second projection $S \times \mathbf P^1 \to \logGm$.  We therefore have $q_\ast M_{S \times \mathbf P^1}^{\rm gp} = M_S^{\rm gp} \times t^{\mathbf Z_S}$.  In particular, $\beta'$ can be represented uniquely as $\alpha t^n$ where $\alpha \in M_S^{\rm gp}$ and $n : S \to \mathbf Z$ is a locally constant function.

To see that this formula actually describes $\beta$, consider $\beta \alpha^{-1} t^{-n}$.  This is now a map $S \times \logGm \to \logGm$ whose restriction to $S \times \mathbf P^1$ is trivial.  Let $f : T \to S \times \logGm$ be any map.  By Proposition~\ref{prop:p1}, $f^{-1}(S \times \mathbf P^1)$ is a logarithmic modification $p : T' \to T$ and $p^\ast f^\ast \beta = 1$ by construction.  But $\Gamma(T, M_T^{\rm gp}) \to \Gamma(T', M_{T'}^{\rm gp})$ is an injection (in fact an isomorphism) \cite[Theorem~4.4.1]{logpic}, so we conclude $f^\ast \beta = 1$, as required.
\end{proof}

\begin{corollary}
We have $\End(\logGm) = \mathbf Z$ and every $S$-morphism $\logGm \to \logGm$ is uniquely representable as the product of a translation and a homomorphism.
\end{corollary}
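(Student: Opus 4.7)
The plan is to deduce both assertions directly from Proposition~\ref{prop:end-logGm}, which has done essentially all the work.

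First I would address the factorization statement. An $S$-morphism $\logGm \to \logGm$ amounts to a morphism $S \times \logGm \to \logGm$ (project away the second factor in the target), so by Proposition~\ref{prop:end-logGm} it has a unique expression $\alpha t^n$ with $\alpha \in \Gamma(S, M_S^{\rm gp}) = \logGm(S)$ and $n : S \to \mathbf Z$ locally constant. I would then interpret each factor group-theoretically: multiplication by $\alpha$ is translation by the $S$-point $\alpha$ of $\logGm$, and $t^n$ is the $n$-th power map, which is a group homomorphism because $\logGm$ is commutative and the group law on sections of $M^{\rm gp}$ is multiplication. Uniqueness is inherited from Proposition~\ref{prop:end-logGm}.

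For $\End(\logGm) = \mathbf Z$, I would specialize to group homomorphisms. A homomorphism $\logGm \to \logGm$ must send the identity section (the trivial section $1 \in \mathcal O_S^\ast \subset M_S^{\rm gp}$) to itself. Since $t$ is the identity function on $\logGm$, its value at the identity section is $1$, so evaluating $\alpha t^n$ at the identity yields $\alpha$. Requiring this to equal $1$ forces $\alpha = 1$, so the homomorphism is simply $t^n$. Taking $S$ to be a point (so that $n$ is a single integer) identifies $\End(\logGm)$ with $\mathbf Z$.

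There is no real obstacle here; the only subtlety is the identification of the two factors in the expression $\alpha t^n$ with a translation and a homomorphism, which is a matter of unwinding the group law on $M^{\rm gp}$. All the substantive content sits in Proposition~\ref{prop:end-logGm} and, ultimately, in Lemma~\ref{lem:Gm-units}.
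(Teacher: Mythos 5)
Your proposal is correct and follows essentially the same route as the paper: both deduce from Proposition~\ref{prop:end-logGm} that a self-map is uniquely $\alpha t^n$, read $\alpha$ as a translation and $t^n$ as the $n$-th power homomorphism, and pin down $\End(\logGm)=\mathbf Z$ by noting that a homomorphism preserves the identity, forcing $\alpha=1$. The paper's proof is just a terser version of the same argument.
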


\begin{proof}
An endomorphism is in particular a self-map of $\logGm$, so up to translation, it is given by an $n^{\mathrm{th}}$-power map as a consequence of the proposition above. Since endomorphisms must preserve the identity, the integer $n$ is the only datum distinguishing such a map.
\end{proof}

\begin{corollary}\label{cor: self-maps}
Let $\mathcal M_{0,2}(\logGm)$ denote the stack on logarithmic schemes whose $S$-points consist of a $\logGm$-torsor $C$ on $S$ and a map $C \to \logGm$.  Then $\mathcal M_{0,2}(\logGm) \simeq \coprod_{r \in \mathbf Z} \mathrm B \mu_r$ with the understanding that $\mathrm B \mu_0 = \logGm \times \BlogGm$.
\end{corollary}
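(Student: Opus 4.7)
The plan is to apply Proposition~\ref{prop:end-logGm} after locally trivializing the $\logGm$-torsor $C$, and then track how the resulting data transforms under change of trivialization. Given $(C, f) \in \mathcal M_{0,2}(\logGm)(S)$, I would first trivialize $C \simeq S \times \logGm$ locally on $S$, at which point Proposition~\ref{prop:end-logGm} represents $f$ uniquely as $\alpha t^n$ with $\alpha \in \Gamma(S, M_S^{\rm gp})$ and $n\colon S \to \mathbf Z$ locally constant. The integer $n$ is intrinsic to $(C,f)$, so this gives a decomposition
\[
\mathcal M_{0,2}(\logGm) \;=\; \coprod_{n \in \mathbf Z} \mathcal M_{0,2}(\logGm)^{(n)}
\]
matching the indexing by $r$ in the statement.

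Next I would analyze the effect of changing the trivialization. A change of trivialization amounts to translation of $C$ by some $\beta \in \logGm(S)$, and a short calculation shows that this sends $\alpha t^n$ to $\alpha\beta^n t^n$. Thus $\mathcal M_{0,2}(\logGm)^{(n)}$ can be identified with the stack quotient $[\logGm/\logGm]_{[n]}$, where $\logGm$ acts on itself via the $n$-th power homomorphism $[n]\colon \logGm \to \logGm$.

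For $n = 0$ the action is trivial, and the quotient is $\logGm \times \BlogGm$, matching the stated convention for the $r = 0$ summand. For $n \neq 0$, the short exact sequence $1 \to \mu_n \to \logGm \xrightarrow{[n]} \logGm \to 1$ identifies the quotient with $\mathrm B \mu_n$: the automorphism group of any object consists of those $\beta$ with $\beta^n = 1$, namely $\mu_n$, and any two objects are locally isomorphic via an $n$-th root of their ratio.

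The main obstacle is the local surjectivity of $[n]\colon \logGm \to \logGm$, on which the last step depends: the strict \'etale topology on log schemes does not suffice, but surjectivity is available in the Kummer log \'etale topology, where sections of $M^{\rm gp}$ admit $n$-th roots locally. Apart from this choice-of-topology point, the result is a purely formal consequence of Proposition~\ref{prop:end-logGm} together with the torsor description of $\BlogGm$.
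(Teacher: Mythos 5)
Your proposal is correct and takes essentially the same route as the paper: decompose by the locally constant exponent $r$ provided by Proposition~\ref{prop:end-logGm}, then identify the degree-$r$ piece with $\mathrm B\mu_r$ via the sequence $1 \to \mu_r \to \logGm \xrightarrow{[r]} \logGm \to 1$ -- the paper merely packages this last step as the explicit correspondence $f \mapsto f^{-1}(1)$, with inverse given by extending a $\mu_r$-torsor along $\mu_r \hookrightarrow \logGm$, rather than as your quotient-stack/cocycle computation. The topological caveat you raise (local surjectivity of $[r]$ needs a Kummer log \'etale or log flat cover, not just a strict \'etale one) is a genuine subtlety that the paper's own argument uses implicitly, since it is exactly what makes $f^{-1}(1)$ locally nonempty and hence an honest $\mu_r$-torsor, so flagging it is a strength of your write-up rather than a gap.
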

\begin{proof}
Locally in $S$, there is an isomorphism $C \simeq S \times \logGm$.  By Proposition~\ref{prop:end-logGm}, a map $f : C \to \logGm$ is therefore representable locally as $\alpha t^r : S \times \logGm \to \logGm$ with $\alpha \in M_S^{\rm gp}$ and $r \in \mathbf Z$.  It follows that $C \to \logGm$ is equivariant with respect to the map $[r] : \logGm \to \logGm$.  The locally constant function $r$ gives a decomposition $\mathcal M_{0,2}(\logGm) = \coprod X_r$.

If $r \neq 0$, the fiber of $C \to \logGm$ over the identity is therefore a torsor under $\ker \: [r] = \mu_r$.  This gives a map $X_r \simeq \mathrm B \mu_r$ sending $f : C \to \logGm$ to $(r, f^{-1}(1))$.  Conversely, given any $\mu_r$-torsor $C_0$ on $S$ we may extend $C_0$ along $\mu_r \to \logGm$ to obtain a $\logGm$-torsor $C$ along with a map $C \to \logGm$.  These operations are easily seen to be inverse to one another.

If $r = 0$ then the map to $C \to \logGm$ factors uniquely through $S$, giving a factor $\logGm$.  The choice of $C$ is parameterized by $\BlogGm$, yielding $X_0 = \logGm \times \BlogGm$.
\end{proof}

Consider the moduli space of logarithmic stable maps from two-pointed $\mathbf P^1$ to a toric variety $X$, in the class of a one-parameter subgroup. As in the previous section, by composing such maps with $X\to \logGm^n$ and stabilizing the map, we see that the moduli space of such maps is obtained from a product of copies of $\logGm$ by logarithmic modification and a root construction. This is implied by the main results of~\cite{AM14,CS12}, and again, the specific logarithmic modification is determined by the tropical subdivisions that are considered explicitly there. 

\bibliographystyle{siam} 
\bibliography{LogTorus}

\end{document}